\newcommand{\Manoa}{M\=anoa}
\newcommand{\Hawaii}{Hawai\kern.05em`\kern.05em\relax i}
\newtheorem{theorem}{Theorem}[section]
\newtheorem{lemma}[theorem]{Lemma}
\newtheorem{proposition}[theorem]{Proposition}
\newtheorem{introtheorem}{Theorem}
\newtheorem{conj}[theorem]{Conjecture}
\theoremstyle{definition}
\newtheorem{definition}[theorem]{Definition}
\newtheorem{example}[theorem]{Example}
\theoremstyle{remark}
\newtheorem{remark}[theorem]{Remark}
\DeclareMathOperator{\Ima}{Im}
\DeclareMathOperator{\lin}{lin}
\newcommand{\Tor}{\operatorname{Tor}}
\newcommand{\gr}{\operatorname{gr}}
\newcommand{\grm}{\operatorname{gr_\frakm}}
\newcommand{\reg}{\operatorname{reg}}
\newcommand{\frakm}{{\mathfrak{m}}}
\title{The Koszul Property for Truncations of Nonstandard Graded Polynomial Rings}
\author{Caitlin M. Davis}
\address{Department of Mathematics, University of Wisconsin-Madison, Madison, WI}
\email{cmdavis22@wisc.edu}
\author{Boyana Martinova}
\address{Department of Mathematics, University of Wisconsin-Madison, Madison, WI}
\email{martinova@wisc.edu}
\date{\today}
\begin{document}

\maketitle

\begin{abstract}
We prove that truncations of nonstandard graded polynomial rings are (nonstandard) Koszul modules in the sense of Herzog and Iyengar.  This provides an analogue of the fact that such truncations have linear resolutions in the standard graded case.
\end{abstract}

\section{Introduction}

Throughout this paper, $S$ will be a (possibly nonstandard) $\mathbb Z$-graded polynomial ring over an arbitrary field $k$. We assume that the grading is positive, in the sense that the degrees of all the variables are in $\mathbb Z_{>0}$. The main result of this article is the following theorem.

\begin{introtheorem}\label{TheoremA}
    Let $S$ be a (possibly nonstandard) $\mathbb Z$-graded polynomial ring.  Then $S_{\geq e}$ is a nonstandard Koszul module for all $e \in \mathbb Z$.
\end{introtheorem}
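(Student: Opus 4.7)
The plan is to transfer the problem to one about monomial ideals in a standard graded polynomial ring via the associated graded construction. Writing $d_i := \deg_S(x_i)$ and $\mathfrak{m} := (x_1,\ldots,x_n)$, the $\mathfrak{m}$-adic filtration on $S$ restricts to a filtration on the submodule $S_{\geq e}$. Since $S$ has a monomial $k$-basis, I would directly compute that $\operatorname{gr}_{\mathfrak{m}}(S) \cong T := k[x_1,\ldots,x_n]$ with the \emph{standard} grading, and that
\[
\operatorname{gr}_{\mathfrak{m}}(S_{\geq e}) \;\cong\; I_e \;:=\; \langle x^\alpha : \textstyle\sum_i d_i \alpha_i \geq e\rangle \;\subseteq\; T,
\]
as graded $T$-modules, the grading on $I_e$ being the standard one inherited from $T$. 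By the Herzog-Iyengar framework, showing that $S_{\geq e}$ is a nonstandard Koszul $S$-module should reduce to showing that $I_e$ has a suitably linear resolution over $T$. Because the minimal monomial generators of $I_e$ generally lie in several standard degrees, the natural target is that $I_e$ is \emph{componentwise linear}, i.e., for each $d$, the ideal $\langle (I_e)_d \rangle$ generated by the degree-$d$ part has a linear $T$-resolution.

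The key step — and the one I expect to carry the technical weight — is producing this linear resolution. I would reorder the variables so that $d_1 \geq d_2 \geq \cdots \geq d_n$, and claim that for every $d$, the ideal $\langle (I_e)_d \rangle$ is a \emph{stable} monomial ideal in the sense of Eliahou-Kervaire. Given a generator $x^\alpha$ and any $j < m := \max\{i : \alpha_i > 0\}$, the swap $x_j \cdot x^\alpha / x_m$ has the same total standard degree $d$, and its nonstandard weight changes by $d_j - d_m \geq 0$ thanks to the ordering; hence the swapped monomial still lies in $(I_e)_d$. Eliahou-Kervaire then yields a linear $T$-resolution of each $\langle (I_e)_d \rangle$, so $I_e$ is componentwise linear.

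Combining the two steps — identification of $\operatorname{gr}_{\mathfrak{m}}(S_{\geq e})$ with $I_e$, and componentwise linearity of $I_e$ — should give the nonstandard Koszul property for $S_{\geq e}$.

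The main obstacle I anticipate is the definitional/bookkeeping layer: one must verify that the precise form of the Herzog-Iyengar definition used in the paper really is detected by componentwise linearity of the associated graded, taking proper account of both the nonstandard grading on $S$ and the shifts that appear in the minimal free resolution of $\operatorname{gr}_{\mathfrak{m}}(S_{\geq e})$. I would therefore expect the paper to include a preliminary reduction lemma translating the nonstandard Koszul condition on $S_{\geq e}$ into a componentwise-linear condition on $\operatorname{gr}_{\mathfrak{m}}(S_{\geq e})$ over the standard graded ring $T$, after which the stability/Eliahou-Kervaire argument above closes the proof.
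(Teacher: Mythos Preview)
Your proposal has a genuine error at the very first step: the claimed isomorphism $\operatorname{gr}_{\mathfrak m}(S_{\geq e}) \cong I_e$ is false. The issue is a confusion between two filtrations on $M=S_{\geq e}$. The Herzog--Iyengar definition uses the $\mathfrak m$-adic filtration $\{\mathfrak m^{j}M\}_j$; what you compute when you ``restrict the $\mathfrak m$-adic filtration on $S$ to the submodule'' is instead the \emph{induced} filtration $\{M\cap\mathfrak m^{j}\}_j$. The associated graded for the induced filtration is indeed the monomial ideal $I_e\subseteq T$, but that is not the object the definition asks about, and the two filtrations differ. Take $S=k[x,y]$ with $\deg x=1$, $\deg y=3$, $e=5$: since $y^2\in S_{\geq 5}$ and $x^3\in\mathfrak m^2$, we have $x^5y=x^3\cdot(x^2y)\in\mathfrak m^2 S_{\geq 5}$, so $y\cdot\overline{x^5}=0$ in $\operatorname{gr}_{\mathfrak m}(S_{\geq 5})$. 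Thus $\operatorname{gr}_{\mathfrak m}(S_{\geq 5})$ has $y$-torsion (the paper computes it to be $T/(y)\oplus T/(y)\oplus T$), whereas $I_5=(x^5,x^2y,y^2)\subseteq T$ is an ideal in a domain and hence torsion-free. So $\operatorname{gr}_{\mathfrak m}(S_{\geq 5})\not\cong I_5$ even as ungraded $T$-modules, and your stability/Eliahou--Kervaire argument, which is about $I_e$, never touches the module you actually need to resolve.

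Relatedly, the target ``componentwise linear'' is misplaced: with the correct $\mathfrak m$-adic filtration, $\operatorname{gr}_{\mathfrak m}(M)$ is always generated in degree~$0$, and the definition demands an honest linear resolution of that degree-$0$-generated module, not a componentwise statement. The paper's proof is quite different from what you anticipate. It inducts on the number of variables, writing $S=A[y]$ with $y$ of maximal degree and filtering $S_{\geq e}$ by $M^{(i)}=S_{\geq e}\cap\langle y^i\rangle$, with subquotients $A_{\geq e-di}\cdot S/\langle y\rangle$. The technical heart is showing, by a direct monomial argument, that these short exact sequences remain exact after applying $\operatorname{gr}_{\mathfrak m}(-)$ (delicate precisely because $\operatorname{gr}_{\mathfrak m}$ is not a functor). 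Induction plus a base-change lemma gives linear resolutions of the associated graded subquotients, and iterated Horseshoe assembles a linear resolution of $\operatorname{gr}_{\mathfrak m}(S_{\geq e})$.
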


Nonstandard Koszulness, which is formalized in Definition \ref{NonstdKoszul}, is a homological property similar to the well-studied notion of Koszulness. Before discussing the definition of Koszulness and the generalization to the nonstandard graded case, we first give some definitions and motivation for studying truncations.

\begin{definition}
 Let $M$ be a graded $S$-module.  For $e \in \mathbb Z$, the \textit{$e^{\text{th}}$ truncation of $M$}, denoted by $M_{\geq e}$, is defined to be
    \[
    M_{\geq e} = \bigoplus_{i \geq e} M_i.
    \]
\end{definition}

Truncations can be defined similarly in the multigraded setting, at least so long as we have a positive grading in the sense of \cite[Appendix A]{BrownErman2024TateRes}; in that case, we can take the partial order $i\geq e$ to be defined by the condition $S_{i-e}\ne 0$. Maclagan and Smith introduced an alternative notion of truncations, utilizing the nef cone of the corresponding variety, in \cite{MaclaganSmith2004}. In addition, Cranton Heller introduced yet another notion of truncation in the recent preprint \cite{CrantonHeller2025}, where her definition is based on resolutions of the diagonal from \cite{HanlonHicksLazarev2024,BrownErman2024ShortProofHHL}. However, these multigraded definitions will not be relevant to the questions in this article.

Understanding homological properties of truncations of a module can have significant implications towards understanding those of the module itself.  In the standard graded setting, Eisenbud and Goto proved in \cite{EisenbudGoto1984} that there are three equivalent notions of Castelnuovo-Mumford regularity, one of which involves resolutions of truncations of a module.  Extending Eisenbud and Goto's result to multigraded regularity has been an open problem. For products of projective spaces, Bruce, Cranton Heller, and Sayrafi \cite{BruceCrantonHellerSayrafi2021} identified a characterization of multigraded regularity in terms of truncations by making an adjustment to the notion of a linear resolution.  They then leveraged this characterization to make computing multigraded regularity on products of projective spaces more manageable.  See also the introduction of \cite{CrantonHeller2025} for further motivation for studying truncations in a multigraded setting.

Turning specifically to the nonstandard $\mathbb Z$-graded situation, Brown and Erman recently investigated Betti tables of truncations of nonstandard graded modules in \cite{bePositivity2024}.  Our main result (Theorem \ref{TheoremA}) contributes to this ongoing study of truncations and gives a deeper understanding of their structure.  Namely, Brown and Erman’s result focuses on Betti numbers, while the Koszul property from Theorem \ref{TheoremA} provides a partial description of their differentials. 

Koszul algebras form an important and much-studied class of rings across many areas of algebra. Moreover, Koszul algebras form a bridge between commutative and noncommutative algebra, as much of the theory agrees in both settings. The theory originates with Priddy (\cite{Priddy1970}); see \cite{ConcaDeNegriRossi2013}, Section 2 of \cite{BeilinsonGinzburgSoergel1996}, and the references within for a thorough review. Although Koszulness is a statement about the resolution of the residue field, it is a strong property that bounds the complexity of modules over the ring. In particular, if $R$ is Koszul, then every finitely generated $R$-module has finite Castelnuovo-Mumford regularity \cite{AvramovEisenbud1992,AvramovPeeva2001}. Koszul algebras show up in many combinatorial settings as well; see \cite{AlmousaReinerSundaram2024, Coron2023, JosuatVergesNadeau2024, LaClairMastroeniMcCulloughPeeva2024, ReinerStamate2010} for some examples. Koszulness is also at the heart of fundamental topics, such as Koszul duality and the Berstein-Gel'fand-Gel'fand correspondence \cite{BersteinGelfandGelfand1978}, making it a particularly useful property to study. 

Herzog, Reiner, and Welker (\cite{HerzogReinerWelker1998}) extended the notion of Koszul algebras to the setting of local rings, and Herzog and Iyengar (\cite{HerzogIyengar2005}) then extended this to a definition of Koszul modules over local rings.  The same definition naturally applies to rings with nonstandard gradings (in fact, semigroup rings were the focus of Herzog, Reiner and Welker’s original work). Koszul modules have also been studied under the name ``modules with linear resolution," however this is only an apt name in the standard graded setting. Modules of this form are particularly nice because they are known to satisfy important homological properties in the local and standard graded settings. For example, it is immediate from the definition that such modules have regularity zero. Another nice homological property is that the Poincar\'e series of a Koszul module is a rational function \cite{HerzogIyengar2005}. Notably, Eisenbud-Goto \cite{EisenbudGoto1984} show that high degree truncations of any module over a polynomial ring are Koszul in the standard graded case.  In this article, we study the analogous question in the setting of nonstandard graded polynomial rings.

Nonstandard graded and, more generally, multigraded rings have been at the center of much active research in recent years. Notably, Benson \cite{Benson2004} generalized Castelnuovo-Mumford regularity to the nonstandard graded setting, and this played a major role in Symonds’s results on invariant theory in positive characteristic \cite{Symonds2011}. Maclagan and Smith \cite{MaclaganSmith2004} further extended this notion to the multigraded setting. These developments inspired many open questions in commutative algebra and algebraic geometry.  Questions concerning multigraded regularity have been addressed by: \cite{BotbolChardin2017, BruceCrantonHellerSayrafi2021, BruceCrantonHellerSayrafi2022, ChardinHolanda2022, ChardinNemati2020, SidmanVanTuylWang2006}.  Multigraded syzygies have also been of great interest; see \cite{BuseChardinNemati2022, beLinSyzOCurvesIWProjSp,BrownErman2024LinearStrands, bePositivity2024, BrownErman2024TateRes,  BerkeschKleinLoperYang2021, BrownSayrafi2024, Cobb2024, EisenbudErmanSchreyer2015, HaradaNowrooziVanTuyl2022, HaimanSturmfels2002, HeringSchenkSmith2006, SidmanVanTuyl2004, Yang2021}. 

Addressing (and even formulating) questions in the multigraded setting has often required reexamining and altering definitions, statements, and techniques from the standard graded setting. See, for example, \cite{BerkeschErmanSmith2017, MaclaganSmith2004, BrownErman2024TateRes}. The following definition due to Herzog-Iyengar \cite{HerzogIyengar2005} makes the necessary adjustments to the definition of Koszul modules. 

\begin{definition}(\cite[cf.~Definition~1.3]{HerzogIyengar2005})
\label{NonstdKoszul}
    Let $S$ be a nonstandard graded or multigraded $k$-algebra with maximal ideal $\mathfrak m$ and $M$ a graded $S$-module.  We say $M$ is a \textit{nonstandard Koszul module} if $\grm(M)$ has a linear resolution over the (standard graded) ring $\grm(S)$.
\end{definition} 

Nonstandard Koszul algebras can be defined in terms of the above definition as follows: a $k$-algebra $R$ is said to be a \textit{nonstandard Koszul algebra} if $k$ is a nonstandard Koszul module over $R$.

The following serves as a first example of our main result.

\begin{example}
\label{ex:IntroExample}
    Consider the nonstandard graded polynomial ring $S = k[x,y]$, where $\deg(x) = 1$ and $\deg(y) = 3$. Let $M = S_{\geq 5}$. By Theorem $\ref{TheoremA}$, $M$ is a nonstandard Koszul module, meaning that $\grm(M)$ has a linear resolution over $R = \grm(S)$. For small examples like this one, we can find the linear resolution explicitly. 

    Observe that $S_{\geq 5}$ is generated as an $S$-module by $\{x^5, x^2y, y^2\}$, and has resolution 
\[
\begin{tikzcd}
    0\arrow[r]  & S^2(-8)\arrow[r, "\varphi"] & S^2(-5) \oplus S(-6)\arrow[r] & S_{\geq 5}\arrow[r] & 0,
\end{tikzcd}
\]
where \[\varphi = \begin{bmatrix}
        y & 0\\
        -x^3 & y \\
        0 & -x^2
    \end{bmatrix}.
\]

We will denote the resolution of $S_{\geq 5}$ by $ \mathcal F_{\bullet}$. Strictly speaking, this resolution begins at $S^2(-5)\oplus S(-6)$, but it will prove useful to include $M$ when we write out the complex. Following \cite[Proposition 1.5]{HerzogIyengar2005}, we can check that $M$ is a nonstandard Koszul module by verifying that the linear part of the resolution of $M$, which we denote by $\lin(\mathcal F_{\bullet})$, is acyclic. In this case,

\[
\begin{tikzcd}
    \lin(\mathcal F_{\bullet})\colon  0 \arrow[r]& R^2(-1) \arrow[r, "\overline{\varphi}"]& R^3,
\end{tikzcd}
\]
where 
\[
\overline{\varphi} = \begin{bmatrix}
        y & 0\\
        0 & y \\
        0 & 0
    \end{bmatrix}.
\]

This complex is acyclic (as it is exact at $R(-1)^2$), so $M$ is nonstandard Koszul.  Furthermore, 
\cite[Proposition 1.5]{HerzogIyengar2005} tells us that, when $\lin(\mathcal F_{\bullet})$  is acyclic, it resolves $\grm(M)$.  For this example, we can therefore conclude that $\grm(S_{\geq 5}) \cong R/(y)\oplus R/(y) \oplus R$, and the above is its linear resolution over $\grm(S)$.
\end{example}

Proving Theorem \ref{TheoremA} amounts to showing that $\grm(S_{\geq e})$ has a linear resolution over $\grm(S)$. In Example \ref{ex:IntroExample}, we were able to realize $\grm(S_{\geq e})$ as a direct sum of modules with known linear resolutions.  In general, such a simple direct sum decomposition is not possible. Instead, our proof of Theorem \ref{TheoremA} relies on induction on the number of variables and utilizes filtrations to express $\grm(S_{\geq e})$ as an extension of $\grm(S)$-modules whose resolutions are known. Since $\grm(-)$ is not even a functor, the difficulty lies in showing that a short exact sequence of $S$-modules gives a short exact sequence of $\grm(S)$-modules. From there, we iteratively apply the Horseshoe Lemma to construct a linear resolution of $\grm(S_{\geq e})$.

Theorem \ref{TheoremA} combined with the regularity of the truncation gives strong conditions on the structure of the resolution of $S_{\geq e}$.

\begin{example}
    Let $S = k[x,y]$, where $\deg(x) = 1$ and $\deg(y) = 4$, and consider $M = S_{\geq 5}$. Let $\mathcal{F}_{\bullet}$ be the minimal free resolution of $M$ over $S$.  $M$ is generated by $\{x^5, xy, y^2\}$ as an $S$-module, so $F_0 = S(-5)^2 \oplus S(-8)$.  

    Since $M$ is nonstandard Koszul, each column of the matrix representing the map $F_1 \rightarrow F_0$ must contain an entry which is linear (i.e. a linear combination of the variables). If a column doesn't contain such an entry, then the linear part of the resolution will fail to be acyclic, which would contradict the nonstandard Koszulness of $M$. Once we know a column contains a linear entry, we know that the corresponding summand of $F_1$ can only have a twist of the form $a-b$ where $a$ is a twist appearing in $F_0$ and $b$ is a degree of a variable in $S$.  In particular, $F_1$ can only contain summands $S(-6)$, $S(-9)$, and $S(-12)$.

    On the other hand, we know that $\reg(M) = 5$. This can be obtained by examining the long exact sequence in local cohomology that arises from the short exact sequence
    \[
    0 \longrightarrow S_{\geq 5} \longrightarrow S \longrightarrow S/S_{\geq 5} \longrightarrow 0.
    \]
    See \cite[p. 501]{Symonds2011} for the definition of regularity in terms of local cohomology. In this case, the Betti table can only have $5 + 3 = 8$ rows, where $\sigma(S) = 3$ is the Symonds' constant (see \cite{Symonds2011}). Thus, the twists appearing in $F_i$ must be at most $i+8$, and the biggest summand that can appear in $F_1$ is $S(-9)$. Importantly, regularity gives only an upper bound on the degrees of the syzygies.  For example, regularity alone does not rule out direct summands such as $S(-7)$ from appearing in $F_1$. Note that, in general, $\reg(S_{\geq d}) \leq d$, but equality may not hold. However, even this bound allows us to preclude certain direct summands. 
    
    Combining these two conditions yields a strong restriction on possible summands; namely, the only possible summands that can appear in $F_1$ are $S(-6)$ and $S(-9)$. 

    To confirm, we note that the resolution of $M$ is given by
    \[\begin{tikzcd}
    0\arrow[r]  & S^2(-9)\arrow[r, "\phi"] & [2.3em] S^2(-5) \oplus S(-8)\arrow[r] & M\arrow[r] & 0,
\end{tikzcd}\]
\end{example}
where 
\[ \phi = \begin{bmatrix}
        y & 0\\
        -x^4 & y \\
        0 & -x
    \end{bmatrix}.\]

A final piece of motivation for Theorem \ref{TheoremA} comes from questions related to nonstandard graded Veronese subrings.  In the standard graded setting, it is known that Veronese subrings are Koszul; see \cite{ConcaDeNegriRossi2013} for a survey of this and related results. One proof of this fact uses the linear resolutions of truncations as a stepping stone to proving Koszulness for Veronese rings.  We conjecture that analogous results hold for any $\mathbb Z$-graded polynomial ring.

\begin{conj}[Davis-Erman-Martinova]
    If $S$ is a nonstandard graded polynomial ring and $S^{(e)}$ the $e$-th Veronese subring, then $S^{(e)}$ is a nonstandard Koszul algebra for $e$ sufficiently large.
\end{conj}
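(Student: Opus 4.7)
The plan is to reduce the conjecture to showing that the standard graded ring $\grm(S^{(e)})$ is a (classical) Koszul algebra. Indeed, with $\frakm^{(e)} := \bigoplus_{n \geq 1} S_{ne}$ denoting the maximal ideal of $S^{(e)}$, one has $\grm(k) = k$, so by Definition \ref{NonstdKoszul} the conjecture is equivalent to $k$ admitting a linear resolution over the standard graded $k$-algebra $\grm(S^{(e)})$, i.e., to that algebra being Koszul in the classical sense.

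The first step is to identify $\grm(S^{(e)})$ explicitly. Viewing $S^{(e)}$ as the affine semigroup algebra of the submonoid $\Lambda_e := \{\alpha \in \mathbb{N}^n : \sum_i \alpha_i d_i \in e \mathbb{Z}\}$, I would describe $\grm(S^{(e)})$ as a ``tangent cone'' toric algebra: its degree-one generators are the atoms of $\Lambda_e$ (the elements of its minimal Hilbert basis), and its defining ideal encodes the fact that a product of several atoms becomes zero precisely when the corresponding sum of atoms admits a strictly shorter decomposition inside $\Lambda_e$. Small examples (for instance $S = k[x,y]$ with $\deg x = 1$, $\deg y = 3$, and $e = 2$ or $e = 4$) can be verified by hand and exhibit $\grm(S^{(e)})$ as a quotient of a polynomial ring by a visibly quadratic binomial or monomial toric ideal.

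The second and main step is to establish classical Koszulness of this standard graded ring. One natural strategy is to exhibit a quadratic Gröbner basis for the defining toric ideal, using the combinatorics of $\Lambda_e$ and its atoms; such a basis would immediately force Koszulness. Alternatively, I would try to adapt the Horseshoe Lemma strategy used in the proof of Theorem \ref{TheoremA}: decompose $S = \bigoplus_{r=0}^{e-1} S^{[r]}$ as $S^{(e)}$-modules, where $S^{[r]}$ is the $k$-span of the graded pieces of $S$ of degree congruent to $r$ modulo $e$; invoke Theorem \ref{TheoremA} to control each $S^{[r]}$ via its relation to a truncation of $S$; and iteratively assemble a linear resolution of $k$ over $\grm(S^{(e)})$ from the resulting pieces.

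The hardest part will be that the $\frakm^{(e)}$-adic filtration on $S^{(e)}$ is not directly compatible with the $\frakm$-adic filtration on $S$: the minimal generators of $\frakm^{(e)}$ are monomials in $S$ of various ordinary degrees, and powers of $\frakm^{(e)}$ interact with powers of $\frakm$ in subtle ways, so $\grm(S^{(e)})$ is genuinely different from a Veronese of $\grm(S)$. Controlling this discrepancy uniformly across all choices of $d_1, \ldots, d_n$ and $e$, and in particular proving that the toric ideal of $\grm(S^{(e)})$ is generated by quadrics, is where I expect the bulk of the technical work to lie.
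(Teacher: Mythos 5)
This statement is a conjecture that the paper explicitly leaves open --- the authors state that Theorem \ref{TheoremA} alone does not lead to a proof of it --- so there is no proof in the paper to compare against, and what you have written is a research program rather than a proof. Your opening reduction is correct and essentially immediate from the definitions: since $\grm(k)=k$, the conjecture is equivalent to the classical Koszulness of the standard graded ring $\grm(S^{(e)})$. Your description of $\grm(S^{(e)})$ as a semigroup-theoretic ``tangent cone'' is also broadly right, modulo one sign error: the image of a product of $k$ atoms vanishes in $\frakm^{(e)k}/\frakm^{(e)k+1}$ precisely when the corresponding element of $\Lambda_e$ admits a strictly \emph{longer} factorization into atoms (i.e., lies in $\frakm^{(e)k+1}$), not a shorter one.

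The genuine gap is that neither of your two strategies for the main step is carried out, and each faces a concrete obstruction that you name but do not resolve. For the Gr\"obner basis route, you would need to prove that the defining ideal of $\grm(S^{(e)})$ --- a mix of binomials and monomials governed by factorization lengths in $\Lambda_e$ --- is not merely quadratic but admits a quadratic Gr\"obner basis uniformly in $d_1,\dots,d_n$ and $e$; nothing in your sketch constrains these factorization lengths, and quadraticity of the defining ideal is in any case only a necessary condition for Koszulness. For the Horseshoe/decomposition route, the standard graded argument deduces Koszulness of the Veronese from linear resolutions of truncations via a change-of-rings descent from $S$ to $S^{(e)}$; in the nonstandard setting the modules $S^{[r]}$ are no longer shifts of truncations of $S$ in any straightforward sense, and --- as you yourself observe --- the $\frakm^{(e)}$-adic and $\frakm$-adic filtrations are incompatible, so Theorem \ref{TheoremA} does not transfer. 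Since the entire content of the conjecture lies in closing exactly this gap, the proposal should be regarded as a plausible plan of attack on an open problem, not as a proof.
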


\begin{remark}
    We are grateful to Alexandra Seceleanu for pointing out an error in an earlier draft. We had previously excluded the hypothesis that $e$ be sufficiently large; however, examples such as $k[x,y,z]^{(15)}$ where $\deg(x) = 3$, $\deg(y) = 4$, and $\deg(z) = 5$ prove this to be false. To motivate this additional hypothesis, note that if $R$ is any standard graded $k$-algebra, then $R^{(e)}$ is Koszul for $e$ sufficiently large.
\end{remark}

We emphasize that Theorem \ref{TheoremA} alone does not lead to a proof of this conjecture. But it demonstrates that Koszulness results from the classical graded setting have analogues in the nonstandard graded setting, providing some motivation for the conjecture.

\subsection*{Acknowledgments}
We are tremendously grateful to Daniel Erman for his guidance, support, and feedback throughout this project. We would also like to thank Maya Banks, John Cobb, Jose Israel Rodriguez, and Aleksandra Sobieska for their mentorship and helpful comments. Multiple discussions with Srikanth Iyengar helped shape our understanding of Koszul modules and relevant results, for which we are very appreciative. We are very grateful to  Aldo Conca, Lauren Cranton Heller, David Eisenbud, Jason McCullough, Alexandra Seceleanu, Gregory Smith, and many more for their valuable insights and suggestions. Many of these conversations occurred at the Introductory Workshop in Commutative Algebra at SLMath, so we would also like to thank SLMath for hosting events like this that provide the opportunity for researchers to discuss their work. We acknowledge the support from the National Science Foundation Grant DMS-2200469, as well as that from the math departments at the University of \Hawaii \ at \Manoa \ and the University of Wisconsin-Madison. Lastly, most computations were performed with the aid of Macaulay2 \cite{M2}.

\section{Inductive Setup}

Let $S = k[x_1, x_2, \ldots, x_n]$ be a possibly nonstandard graded polynomial ring with maximal ideal $\mathfrak m = \langle x_1,\ldots,x_n \rangle$, and let $R = \grm (S)$. For fixed $j<n$, let $A = k[x_1, \ldots, x_j]$, with maximal ideal $\mathfrak n = \langle x_1, \ldots x_j \rangle$, and let $B = \gr_{\mathfrak n}(A)$.

Since we have a ring map $R \to B$, any $B$-module $N$ is naturally an $R$-module, $N_R$, which is annihilated by $\langle x_{j+1}, \ldots, x_n \rangle$. Similarly, any $A$-module $C$ is naturally an $S$-module, denoted by $C_S$. Extending modules in this way interacts with the $\grm(-)$ construction as might be expected. 

 \begin{lemma}
\label{lemma:extendinggr}
    For any $A$-module $C$, we have
    \[
    \grm(C_S) \cong (\gr_{\mathfrak n}(C))_{\grm(S)}.
    \]
\end{lemma}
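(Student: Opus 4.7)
The plan is to establish the isomorphism first at the level of graded abelian groups and then verify compatibility of the $R$-module structures.

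The key observation is that $C_S$ is annihilated by $\langle x_{j+1}, \ldots, x_n \rangle$, so the action of $\mathfrak m = \langle x_1, \ldots, x_n \rangle$ on $C_S$ factors through the quotient $S \twoheadrightarrow A$. Concretely, I would first prove the identity $\mathfrak m^i C_S = \mathfrak n^i C$ for all $i \geq 0$, either directly by expanding $\mathfrak m^i = (\mathfrak n + \langle x_{j+1}, \ldots, x_n\rangle)^i$ and observing that every term involving some $x_l$ with $l>j$ annihilates $C_S$, or by a short induction on $i$ starting from the base case $i=1$.

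From this equality of submodules, the graded pieces $\mathfrak m^i C_S/\mathfrak m^{i+1} C_S$ and $\mathfrak n^i C/\mathfrak n^{i+1} C$ agree in each degree, yielding an isomorphism $\grm(C_S) \cong \gr_{\mathfrak n}(C)$ of graded abelian groups. To upgrade this to an isomorphism of $R$-modules, it suffices to check equivariance on the degree-one generators of $R$, namely the classes of $x_1, \ldots, x_n$. For $l \leq j$, multiplication by $x_l$ on both sides coincides with the $A$-module action on $C$. For $l > j$, both actions are zero: on the left because $x_l$ annihilates $C_S$, and on the right because $x_l \in R$ maps to zero under the ring map $R \to B$ through which the $(-)_{\grm(S)}$ construction factors.

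I do not anticipate a significant obstacle, as the lemma is essentially the compatibility of the associated graded construction with restriction of scalars along the surjection $S \twoheadrightarrow A$. The only step requiring computation is the identity $\mathfrak m^i C_S = \mathfrak n^i C$, which is elementary but is the heart of the argument; everything else is bookkeeping about which generators of $R$ act trivially.
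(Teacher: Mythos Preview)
Your proposal is correct and follows essentially the same approach as the paper: establish $\mathfrak m^i C_S = \mathfrak n^i C$ for all $i$, deduce the isomorphism of graded abelian groups, and then check compatibility with multiplication by each variable. If anything, your write-up is slightly more explicit than the paper's about why the equality $\mathfrak m^i C_S = \mathfrak n^i C$ holds and about separating the cases $l \leq j$ and $l > j$ when verifying the $R$-module structure.
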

\begin{proof}
    Recall from the definition of $\grm(-)$ that
    \[
    \grm(C_S) = \bigoplus_i \mathfrak m^i C_S/\mathfrak m^{i+1} C_S.
    \]
    By definition, elements in the $i$-th piece have degree $i$ in this graded module.  Similarly,
    \[
    \gr_{\mathfrak n}(C) = \bigoplus_i \mathfrak n^iC/\mathfrak n^{i+1}C,
    \]
    where again elements in the $i$-th piece have degree $i$.  Note that $\mathfrak m^i C_S = \mathfrak n^iC$ as abelian groups for all $i$, meaning that
    \[
    \bigoplus_i \mathfrak m^i C_S/\mathfrak m^{i+1} C_S = \bigoplus_i \mathfrak n^iC/\mathfrak n^{i+1}C
    \]
    as abelian groups.  Furthermore, the equalities $\mathfrak m^i C_S = \mathfrak n^iC$ respect multiplication by any variable.  Therefore, the module structures of the associated graded objects will coincide in the natural way, so that $\grm(C_S) \cong (\gr_{\mathfrak n}(C))_{\grm(S)}$ as $\grm(S)$-modules.
\end{proof}

In the proof of Theorem \ref{TheoremA}, we will induct on the number of variables, $n$.  The following proposition will allow us to construct a resolution over $\grm(S)$ from a resolution over $\grm(A)$. 

\begin{proposition}
\label{cor:inductiveStep}
    Let $C$ be any $A$-module and $C_S$ be the corresponding $S$-module. If $\gr_{\mathfrak n}(C)$ has a linear resolution over $\gr_{\mathfrak n}(A)$, then $\grm(C_S)$ has a linear resolution over $\grm(S)$. 
\end{proposition}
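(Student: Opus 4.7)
By Lemma \ref{lemma:extendinggr}, the $\grm(S)$-module $\grm(C_S)$ is identified with $(\gr_{\mathfrak{n}}(C))_R$, so it suffices to prove the following statement: for any graded $B$-module $N$ admitting a linear free resolution over $B$, the $R$-module $N_R$ (with $x_{j+1}, \ldots, x_n$ acting as zero) has a linear free resolution over $R$. The key observation enabling this is that, as standard graded polynomial rings, $R = B[x_{j+1}, \ldots, x_n]$, and $B = R / \langle x_{j+1}, \ldots, x_n \rangle$.

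The plan is to build a resolution of $N_R$ by combining two standard inputs. First, let $F_\bullet \to N$ be the given linear free resolution over $B$, so that $F_p = B(-t-p)^{\alpha_p}$ for some initial twist $t$. Since $R$ is a polynomial extension of $B$, it is flat over $B$, and base change yields a linear free resolution $F_\bullet \otimes_B R \to N \otimes_B R$ over $R$. Second, the Koszul complex $K_\bullet = K(x_{j+1}, \ldots, x_n; R)$ is a linear free resolution of $B$ over $R$. I would then form the total complex $T_\bullet$ of the double complex $(F_\bullet \otimes_B R) \otimes_R K_\bullet$ and show that it is a linear free resolution of $N_R$.

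Two things need to be checked. Linearity is a twist count: the $(p,q)$-summand of the double complex is a direct sum of copies of $R(-t-p-q)$, so every generator in homological degree $m$ sits in internal degree $t+m$, and both the $F_\bullet$- and $K_\bullet$-differentials raise internal degree by exactly one. Acyclicity amounts to the vanishing of $\Tor^R_i(N \otimes_B R, B)$ for $i > 0$. Computing this Tor by means of the Koszul resolution of $B$, one obtains the Koszul homology $H_i(K(x_{j+1}, \ldots, x_n; N \otimes_B R))$. Because $N \otimes_B R \cong N[x_{j+1}, \ldots, x_n]$, the sequence $x_{j+1}, \ldots, x_n$ is regular on this module, and all higher Koszul homology vanishes. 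In degree zero, one obtains $(N \otimes_B R) \otimes_R B \cong N$ with trivial action of the extra variables, which is precisely $N_R \cong \grm(C_S)$.

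The main obstacle, such as it is, lies in assembling these standard pieces carefully enough that linearity and acyclicity hold simultaneously, and in particular in verifying that the regular sequence argument applies cleanly in the graded setting so that the totalization does not introduce stray twists. Beyond that, the argument is essentially a bookkeeping exercise combining flat base change along $B \hookrightarrow R$ with the Koszul complex on the newly adjoined variables.
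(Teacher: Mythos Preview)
Your proposal is correct and follows essentially the same approach as the paper: flat base change of the given linear resolution along $B \hookrightarrow R$, tensored with the Koszul complex on the new variables, with acyclicity verified via the regularity of $x_{j+1},\ldots,x_n$ on $N\otimes_B R$ and the identification $N_R \cong \grm(C_S)$ coming from Lemma~\ref{lemma:extendinggr}. The paper separates the acyclicity argument into Lemma~\ref{lemma:Koszulcomplex}, whereas you prove it inline; you also make the linearity check (the twist count on the double complex) explicit, which the paper leaves implicit.
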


The following lemma is necessary for the proof of the above proposition. 

\begin{lemma}
\label{lemma:Koszulcomplex}
    Let $R$, $B$, and $N$ be as above. If $\mathcal F_\bullet$ is a resolution of $N\otimes R$ over $R$ and $\mathcal G_\bullet$ is the Koszul complex of of $\{x_{j+1}, \ldots , x_n\}$ over $R$, then the totalization of the double complex $\mathcal F_\bullet \otimes \mathcal G_\bullet$ is a resolution of $N_R$ over $R$.
\end{lemma}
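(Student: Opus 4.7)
The plan is to exploit the double complex structure via a standard spectral sequence argument. Every piece $\mathcal{F}_p\otimes_R \mathcal{G}_q$ is a free $R$-module, so $\mathrm{Tot}(\mathcal{F}_\bullet\otimes_R \mathcal{G}_\bullet)$ is automatically a complex of free $R$-modules; the content of the lemma is therefore that $H_0$ of this totalization is $N_R$ and that all higher homology vanishes.

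The key structural observation I would record first is that $R\cong B[x_{j+1},\ldots,x_n]$ as graded rings, since $\grm(S)$ and $\gr_{\mathfrak{n}}(A)$ are both standardly graded polynomial rings on the obvious variables. Consequently, for any $B$-module $N$, the $R$-module $N\otimes_B R$ is isomorphic to the polynomial extension $N[x_{j+1},\ldots,x_n]$, and in particular the sequence $x_{j+1},\ldots,x_n$ is a regular sequence on $N\otimes_B R$.

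I would then apply the spectral sequence of the double complex that takes horizontal ($\mathcal{F}$-direction) homology first. Because each $\mathcal{G}_q$ is a free $R$-module, tensoring with $\mathcal{G}_q$ is exact, yielding
\[
E^1_{p,q} = H_p(\mathcal{F}_\bullet\otimes_R \mathcal{G}_q) \cong H_p(\mathcal{F}_\bullet)\otimes_R \mathcal{G}_q,
\]
which vanishes for $p>0$ and equals $(N\otimes_B R)\otimes_R \mathcal{G}_q$ for $p=0$. Computing $E^2$ therefore reduces to taking vertical homology along the column $p=0$, which is precisely the Koszul homology of $\{x_{j+1},\ldots,x_n\}$ on the module $N\otimes_B R$. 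By the regular-sequence observation, this Koszul complex is acyclic in positive degrees with $H_0\cong N_R$. Thus $E^2$ has a unique nonzero entry at the origin, equal to $N_R$, the spectral sequence collapses at $E^2$, and it converges to $H_\bullet(\mathrm{Tot}(\mathcal{F}_\bullet\otimes_R \mathcal{G}_\bullet))$ as desired.

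I do not anticipate any serious technical obstacle; the argument is routine once the regular-sequence observation is in place. The point requiring the most care will be confirming that the $R$-module structure on $E^{\infty}_{0,0}$ really is the restriction-of-scalars module $N_R$ defined via $R\twoheadrightarrow B$, rather than merely some other $R$-module structure on the same underlying abelian group $N$.
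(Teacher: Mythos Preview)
Your proof is correct and follows essentially the same approach as the paper: both arguments reduce the computation of $H_*\bigl(\mathrm{Tot}(\mathcal{F}_\bullet\otimes_R\mathcal{G}_\bullet)\bigr)$ to the Koszul homology of $\{x_{j+1},\ldots,x_n\}$ on $N\otimes_B R$, and both conclude by observing that this sequence is regular on $N\otimes_B R$. The only cosmetic difference is that the paper packages the reduction as the balancing property of $\Tor^R_*(N\otimes_B R,\,R/\langle x_{j+1},\ldots,x_n\rangle)$, whereas you unfold that balancing argument explicitly via the double-complex spectral sequence; your explicit identification $R\cong B[x_{j+1},\ldots,x_n]$ is a useful justification of the regular-sequence claim that the paper leaves implicit.
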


\begin{proof}[Proof of Lemma \ref{lemma:Koszulcomplex}]
    Let $\mathfrak m_j = \langle x_{j+1}, \ldots , x_n \rangle$, and note that $N_{R} \cong (N \otimes_{B} R) \otimes_{R} (R / \mathfrak m_j) = \Tor_0^{R}(N \otimes_{B} R, R / \mathfrak m_j).$ Therefore, we get a complex 
    \[ 
    F_{\bullet} \otimes_R \mathcal G_{\bullet} \longrightarrow N_R \longrightarrow 0.\] 
    In order to show this complex is in fact a resolution, we must show there is no homology (equivalently, that $\Tor_i^R(N \otimes_B R, R / \mathfrak m_j) = 0$ for $i > 0$). We recall that one of the ways to compute $\Tor_i^R(N \otimes_B R, R / \mathfrak m_j)$ is as $H_i(\mathcal G_{\bullet} \otimes_R (N \otimes_B R))$. 

    In our case, $\mathcal G_{\bullet}$ is the Koszul complex of $\{x_{j+1}, \ldots , x_n\}$ over $R$, so $\mathcal G_{\bullet} \otimes_R (N \otimes_B R)$ is the Koszul complex of $\{x_{j+1}, \ldots , x_n\}$ over $N \otimes_B R$. Since $\{x_{j+1}, \ldots , x_n\}$ is a regular sequence over $N \otimes_B R$, the Koszul complex is exact. Thus, $H_i(\mathcal G_{\bullet} \otimes_R (N \otimes_B R)) = 0$ for all $i > 0$, as desired. 
\end{proof}

We can now prove Proposition \ref{cor:inductiveStep}.

\begin{proof}[Proof of Proposition \ref{cor:inductiveStep}]
Let $R = \grm(S)$ and $B = \gr_{\mathfrak n}(A)$ and $N =\gr_{\mathfrak n}(C)$, a $B$-module.  Let $\mathcal H_{\bullet}$ be the resolution of $N$ over $B$.  Since $R$ is a flat $B$-module, applying $-\otimes_B R$ to $\mathcal H_{\bullet}$ gives a resolution of $N \otimes_B R$ over $R$, which we'll denote by $\mathcal F_{\bullet}$.  Applying Lemma \ref{lemma:Koszulcomplex} yields a resolution of $N_R$ over $R$. Unraveling notation, $N_R = (\gr_{\mathfrak n}(C))_{\grm(S)}$, which Lemma \ref{lemma:extendinggr} shows is isomorphic to $\grm(C_S)$. Thus, we have a linear resolution of $\grm(C_S)$ over $R = \grm(S)$. 
 
\end{proof}

\section{Main results}

\begin{theorem}
\label{theorem:Strunction}
    Let $S = k[x_1, x_2, \ldots, x_n]$, $m = \langle x_1,\ldots,x_n \rangle$ and let $R = \grm S$. Then, the $R$-module $\grm (S_{\geq e}) $ has a linear resolution for any integer $e$. 
\end{theorem}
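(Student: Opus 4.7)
The plan is to proceed by double induction, first on the number of variables $n$ and then on the truncation parameter $e$. The base case $n = 1$ is immediate: $S_{\geq e}$ is freely generated by $x_1^{\lceil e/d_1 \rceil}$, so $\grm(S_{\geq e})$ is a free $R$-module with trivial linear resolution. For the inductive step $n \geq 2$, I will relabel the variables so that $x_n$ has maximum degree, writing $d := d_n = \max_j d_j$ and $A := k[x_1, \ldots, x_{n-1}]$. The secondary induction on $e$ has trivial base case $e \leq 0$ (since $S_{\geq e} = S$), and for the inductive step $e \geq 1$ I plan to use the short exact sequence of $S$-modules
\[
0 \to x_n S_{\geq e-d} \hookrightarrow S_{\geq e} \to (A_{\geq e})_S \to 0,
\]
whose cokernel is the submodule of $S_{\geq e}$ spanned by monomials not divisible by $x_n$, viewed as an $S$-module via $S \twoheadrightarrow A$.

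The hard part will be establishing that this sequence remains exact after applying $\grm(-)$. Since $\grm$ is not functorial, exactness is equivalent to the filtration-matching identity
\[
x_n S_{\geq e-d} \cap \mathfrak{m}^i S_{\geq e} \;=\; x_n \mathfrak{m}^i S_{\geq e-d} \qquad \text{for every } i \geq 0.
\]
Because both sides are monomial submodules of $S$, I will reduce this to a monomial check: given a monomial $h \in S_{\geq e-d}$ and a monomial decomposition $x_n h = p \cdot q$ with $p \in \mathfrak{m}^i$ and $q \in S_{\geq e}$, I will produce a decomposition $h = \tilde p \cdot \tilde q$ with $\tilde p \in \mathfrak{m}^i$ and $\tilde q \in S_{\geq e-d}$. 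When $x_n \mid q$, take $\tilde q = q/x_n$ and $\tilde p = p$; then $\deg \tilde q = \deg q - d \geq e - d$. When $x_n \nmid q$, write $p = x_n p''$ (so that $h = p'' q$), pick any $x_j \mid q$ with $j \neq n$ (which exists because $\deg q \geq e \geq 1$), and set $\tilde q = q/x_j$ and $\tilde p = x_j p''$; here $\tilde p$ has the same total exponent sum as $p$, so $\tilde p \in \mathfrak{m}^i$, and the maximality of $d = d_n$ forces $d_j \leq d$, which gives $\deg \tilde q \geq e - d_j \geq e - d$. The maximality of $d$ is exactly what makes this swap legal: if $x_n$ were chosen of small degree, $\tilde q$ could slip below degree $e - d$ and the argument would fail.

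Once $\grm$-exactness is in hand, I will have the short exact sequence
\[
0 \to \grm(x_n S_{\geq e-d}) \to \grm(S_{\geq e}) \to \grm((A_{\geq e})_S) \to 0
\]
of $R$-modules. The bijection $S_{\geq e-d} \xrightarrow{\cdot x_n} x_n S_{\geq e-d}$ preserves the $\mathfrak{m}$-adic filtration (its image of $\mathfrak{m}^i S_{\geq e-d}$ is exactly $\mathfrak{m}^i(x_n S_{\geq e-d})$), hence induces an isomorphism $\grm(S_{\geq e-d}) \cong \grm(x_n S_{\geq e-d})$ of graded $R$-modules; by the inner inductive hypothesis this has a linear resolution. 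The quotient $\grm((A_{\geq e})_S)$ has a linear resolution by Proposition~\ref{cor:inductiveStep} applied to the linear resolution of $\gr_{\mathfrak n}(A_{\geq e})$ provided by the outer inductive hypothesis on $n$. Both outer terms are generated in $\grm$-degree $0$ with linear resolutions, so iterated application of the Horseshoe Lemma produces a resolution of $\grm(S_{\geq e})$ whose differentials are linear (the connecting maps in horseshoe, lifting generators in degree $0$ of the quotient to min generators of $S_{\geq e}$ in degree $0$, have linear entries in all higher homological degrees), completing the induction.
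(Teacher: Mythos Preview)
Your proof is correct and is essentially the paper's argument repackaged: your short exact sequence $0 \to x_n S_{\geq e-d} \to S_{\geq e} \to (A_{\geq e})_S \to 0$ is precisely the $i=0$ layer of the paper's filtration $M^{(i)} = S_{\geq e} \cap \langle x_n^i\rangle$ (indeed $M^{(1)} = x_n S_{\geq e-d}$), and your secondary induction on $e$ simply replaces the paper's explicit descent through that filtration down to $M^{(N)} \cong S$. The monomial-swap verification of $\grm$-exactness exploiting the maximality of $\deg x_n$, the appeal to Proposition~\ref{cor:inductiveStep} for the quotient term, and the Horseshoe step are identical to the paper's.
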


\begin{remark}
    The $\grm(-)$ construction does not see twists, and thus $\grm(M(a))$ is identical to $\grm(M)$ for any $M$ and any $a$.
\end{remark}

Example \ref{ex:Horseshoe} follows through all the main ideas in the proof for a simple example.  The reader may want to read this before, or in parallel with, the proof of Theorem \ref{theorem:Strunction}.

\begin{proof}
    First note that if $e \leq 0$, then $S_{\geq e} = S$, so $\grm(S_{\geq e}) = \grm(S)$, which trivially has a linear resolution over itself.  From this point on, we will assume that $e > 0$.

    We will prove by induction on the number of variables, $n$, that $\grm(S_{\geq e})$ has a linear resolution over $R$ for all $e$. Reorder the variables if needed so that $\deg(x_n) \geq \deg(x_i)$ for $i<n$. 

    As a base case for the induction, consider $S = k[x_1]$ with $\deg(x_1) = d_1$.  Every truncation of $S$ is principal and thus isomorphic to a free module.  Specifically, $S_{\geq e} = \langle x_1^i \rangle$ where $i = \lceil \frac{e}{d_1}\rceil$.  It follows that $R \longrightarrow \grm(S_{\geq e}) \longrightarrow 0$ is a linear resolution for every $e$.

    Assume the claim holds for a polynomial ring with $n-1$ variables, and let $A = k[x_1,\ldots,x_{n-1}]$, where the variables have the same degrees as they do in $S$.  We relabel $x_n$ to be $y$ so that $S = A[y]$. Let $\deg(y) = d$.

    Fix $e\ge 0$, and let $M = S_{\ge e}$.  We will express $\grm(M)$ as an extension of $R$-modules whose resolutions are known and apply the Horseshoe Lemma.  First, let $M^{(i)} = M \cap \langle y^i \rangle$. Since $S = \langle y^0 \rangle \supseteq \langle y^1 \rangle \supseteq \cdots$, we have $M = M^{(0)} \supseteq M^{(1)} \supseteq \cdots$, so the set of $M^{(i)}$'s define a filtration of $M$. 

    In the case that $i\geq\lceil \frac{e}{d}\rceil$, we have $y^i \in M$ since $d\cdot\lceil \frac{e}{d} \rceil \geq e$.  Thus, $M^{(i)} = \langle y^i \rangle$ for $i\geq\lceil \frac{e}{d}\rceil$. The least such $i$ will prove to be useful, so we fix $N = \lceil \frac{e}{d} \rceil$. Since $M^{(N)}$ is generated by a single element, it is isomorphic to $S$ (as $S$-modules).

  We claim that $M^{(i)}/M^{(i+1)}$ is isomorphic to $A_{\geq e-di} \cdot S/\langle y\rangle$.  Let $\psi$ be the composition $M^{(i)}\hookrightarrow \langle y^i\rangle \twoheadrightarrow \langle y^i\rangle/\langle y^{i+1}\rangle$.  In order to understand the filtration $M^{(i)}$, it will be convenient to realize $M$ as the following sum: 
  \[M = A_{\geq e} + A_{\geq e-d}\cdot \langle y\rangle + \ldots + A_{\geq e-(N-1)d}\cdot \langle y^{N-1}\rangle +\langle y^N\rangle.\]
  
  For $0\le i \le N$, we have $M^{(i)} = A_{\geq e-di}\cdot \langle y^i\rangle + \ldots + \langle y^N\rangle$. Then,
  \[ \Ima(\psi) = \left(A_{\geq e-di} \cdot \langle y^i\rangle + \langle y^{i+1} \rangle\right)/\langle y^{i+1}\rangle,\] 
  which is isomorphic to $A_{\geq e-di} \cdot S/\langle y \rangle = A_{\geq e-di}  \cdot A$. Note that $\Ima\psi \cong A_{\geq e-di}$ as $A$-modules. We also have $\ker \psi = M^{(i+1)}$, so $M^{(i)}/M^{(i+1)}\cong A_{\geq e-di}\cdot S/\langle y\rangle$, as claimed. Thus, for every such $i$, we have short exact sequences: 
    \[0 \longrightarrow M^{(i+1)} \longrightarrow M^{(i)} \overset{\alpha_i}{\longrightarrow} A_{\geq e-di} \cdot S/\langle y\rangle \longrightarrow 0.\]

    Fix $i <N$.  From the surjection $\alpha_i\colon M^{(i)}\rightarrow A_{\geq e-di}\cdot S/\langle y\rangle$, we get a surjection $\varphi_i\colon \mathfrak{m}^jM^{(i)}\rightarrow \mathfrak{m}^j \left(A_{\geq e-di}\cdot S/\langle y\rangle\right)$.
    For $u\in \mathfrak{m}^j$ and $v \in M^{(i)}$, $\varphi_i\colon uv \mapsto u\alpha_i(v)$.

    We want to show that $\ker\varphi_i = \mathfrak{m}^j\ker\alpha_i$.  It's enough to consider monomials in these modules. 
    If we take $u \in \mathfrak{m}^j$ and $v \in \ker\alpha_i$, then $\varphi_i(uv)= u\alpha_i(v) = 0$.  For the reverse inclusion, suppose $(\mathbf{x}^{\mathbf{a}}y^b)(\mathbf{x}^{\mathbf{s}}y^t)$ is in $\ker \varphi_i$ where $\mathbf{x}^{\mathbf{a}}y^b \in \mathfrak{m}^j$ and $\mathbf{x}^{\mathbf{s}}y^t \in M^{(i)}$. (Here  we let $\mathbf{x}^{\mathbf{a}}$ denote the monomial $x_1^{a_1}\cdots x_{n-1}^{a_{n-1}}$.) If $t > i$ then $\mathbf{x}^{\mathbf{s}}y^t \in M^{(i+1)}$, so $(\mathbf{x}^{\mathbf{a}}y^b)(\mathbf{x}^{\mathbf{s}}y^t) \in \mathfrak{m}^j\ker\alpha_i$.

    If $t=i$, then $\alpha_i(\mathbf{x}^{\mathbf{s}}y^t) \neq 0$. In this case, $(\mathbf{x}^{\mathbf{a}}y^b)(\mathbf{x}^{\mathbf{s}}y^t) \in \ker \varphi_i$ implies that $\mathbf{x}^{\mathbf{a}}y^b = 0$ in $S/\langle y \rangle$, so $b \geq 1$. Since $i<N$, $di<e$, so $y^i$ is not in $M$, and thus not in $M^{(i)}$.  Therefore, some $s_k\ge 1$.  Define $\mathbf{a'} = (a_1,\ldots, a_{k-1},a_{k}+1,a_{k+1},\ldots, a_{n-1})$ and $\mathbf{s'} = (s_1,\ldots, s_{k-1},s_{k}-1,s_{k+1},\ldots, s_{n-1})$.  Then we have $(\mathbf{x}^{\mathbf{a}}y^b)(\mathbf{x}^{\mathbf{s}}y^t)= (\mathbf{x}^{\mathbf{a'}}y^{b-1})(\mathbf{x}^{\mathbf{s'}}y^{t+1}) $, which is in $\mathfrak{m}^j\ker\alpha_i$ since $\mathbf{x}^{\mathbf{s'}}y^{t+1} \in M^{(i+1)}$.  To see this, note that $\deg(\mathbf{x}^{\mathbf{s'}}y^{t+1}) \ge \deg(\mathbf{x}^{\mathbf{s}}y^{t})$ since $d \geq \deg(x_i)$ for all $i$.

    Define $A^{(i)} = A_{\geq e - di} \cdot S/\langle y\rangle$.  The above discussion shows that, for $0 \leq i < N$, we in fact have a short exact sequence
    \[0 \longrightarrow \mathfrak{m}^jM^{(i+1)} \longrightarrow \mathfrak{m}^jM^{(i)} \overset{\varphi_i}{\longrightarrow} \mathfrak{m}^j A^{(i)} \longrightarrow 0\]
    for each $j\ge0$, where the first map is induced by inclusion.  Thus, for $j$ and $j+1$ we get a commutative diagram

\[
\begin{tikzcd}
  0 \arrow[r] & \mathfrak{m}^{j+1}M^{(i+1)}\arrow[r]\arrow[d] &  \mathfrak{m}^{j+1}M^{(i)}\arrow[r, "\varphi"]\arrow[d] &  \mathfrak{m}^{j+1} A^{(i)} \arrow[r]\arrow[d] & 0 \\
  0 \arrow[r] & \mathfrak{m}^jM^{(i+1)}\arrow[r] &  \mathfrak{m}^jM^{(i)}\arrow[r, "\varphi"] &  \mathfrak{m}^j A^{(i)} \arrow[r] & 0. \\
\end{tikzcd}
\]
Since all vertical arrows are inclusions, applying the Snake Lemma yields a short exact sequence:
\[
0 \longrightarrow (\mathfrak{m}^jM^{(i+1)}) / (\mathfrak{m}^{j+1}M^{(i+1)}) \longrightarrow (\mathfrak{m}^jM^{(i)}) / (\mathfrak{m}^{j+1}M^{(i)})\overset{\varphi_i}{\longrightarrow} (\mathfrak{m}^j A^{(i)})/(\mathfrak{m}^{j+1} A^{(i)})\longrightarrow 0.
\]

Thus, for $0 \leq i < N$, each short exact sequence induces a short exact sequence in the associated graded of the form
\[0 \longrightarrow \grm(M^{(i+1)}) \longrightarrow \grm(M^{(i)}) \longrightarrow \grm(A^{(i)}) \longrightarrow 0.\]
Note that $\grm(A^{(i)}) \cong \grm( A_{\geq e-di} \cdot S/\langle y \rangle).$

The highest $i$ we consider is $N-1$, where the short exact sequence is 
\begin{equation}\label{firstSES}
0 \longrightarrow R \longrightarrow \grm(M^{(N-1)}) \longrightarrow \grm(A_{\geq e-d(N-1)}\cdot S/\langle y \rangle) \longrightarrow 0.
\end{equation}

The inductive hypothesis gives a free resolution of $\gr_{\mathfrak n}(A_{\geq e-di})$ for all $i \leq N-1$.  We can apply Proposition \ref{cor:inductiveStep} to get a linear free resolution of $\grm(A_{\geq e-di} \cdot S/\langle y \rangle)$ over $R$, again for all $i \leq  N-1$. 

Since $R$ also has a linear free resolution (itself), applying the Horseshoe Lemma to the short exact sequence (\ref{firstSES}) yields a free resolution of $\grm(M^{(N-1)})$. In particular, each step of the free resolution of $\grm(M^{(N-1)})$ is the direct sum of the corresponding pieces of the two known resolutions. Since the resolutions of $R$ and $\grm(A_{\geq e-d(N-1)}\cdot S/\langle y\rangle)$ are both linear, and since both modules are generated in degree zero, the same will be true of $\grm(M^{(N-1)})$; see Example \ref{ex:Horseshoe} below.

Let $\mathcal{F}^{N-1} $ denote the free resolution found for $\grm(M^{(N-1)}).$ We can apply the Horseshoe Lemma to the following short exact sequence to obtain a free resolution $\mathcal{F}^{N-2}$ of $\grm(M^{(N-2)}) $
\[0 \longrightarrow \grm(M^{(N-1)}) \longrightarrow \grm(M^{(N-2)}) \longrightarrow \grm(A_{\geq e-d(N-2)} \cdot S/\langle y \rangle)\rangle \longrightarrow 0.\] 

Continuing in this way, we construct linear free resolutions $\mathcal{F}^{N-i}$ for each $\grm(M^{(N-i)})$. Since $M^{(0)} = M$, this process will yield a linear free resolution of $\grm(M)$.
\end{proof}

\begin{example}
\label{ex:Horseshoe}
    Consider $S = k[x_1, x_2, y]$ where $\deg(x_1) = 1$, $\deg(x_2) = \deg(y) = 2$, and $e = 7$. Following notation from above, $N = 4$, and we will show $\grm(M^{(3)})$ has a linear resolution over $\grm(S) = R$. 

    We have the short exact sequence
    \[0 \longrightarrow R \longrightarrow \grm(M^{(3)}) \longrightarrow \grm(A_{\geq 1} \cdot S/\langle y \rangle )\longrightarrow 0.\]

    Recall that Corollary \ref{cor:inductiveStep} gives a linear resolution of $\grm(A_{\geq 1} \cdot S/\langle y \rangle )$ over $R$. For this example, we compute the explicit resolution. The inductive hypothesis gives a resolution, which we call $\mathcal{H}_\bullet$, of $\gr_{\mathfrak n} (A_{\geq 1})$ over $\gr_{\mathfrak n}(A) = B$. Notice that $\mathcal{H}_\bullet$ is the standard graded Koszul complex on two variables over $B$
    \[0 \longrightarrow B(-1) \longrightarrow B^2 \longrightarrow  \gr_{\mathfrak n}(A_{\geq 1}) \longrightarrow 0.\]

    Following the steps outlined in Corollary \ref{cor:inductiveStep}, we apply $ - \otimes_B R$ to $\mathcal{H}_\bullet$, which results in
    \[0 \longrightarrow R(-1) \longrightarrow R^2 \longrightarrow \gr_{\mathfrak n}(A_{\geq 1}) \otimes_B R \longrightarrow 0.\]

    Applying Lemma \ref{lemma:Koszulcomplex} yields
    \[0 \longrightarrow R(-2)  \longrightarrow R(-1)^3 \longrightarrow R^2 \longrightarrow \grm(A_{\geq 1} \cdot S/\langle y \rangle )\longrightarrow 0,\] 
    which is the desired linear resolution of $\grm(A_{\geq 1} \cdot S/\langle y \rangle )$ over $R$. 

    As shown below, we can then apply the Horseshoe lemma to obtain a resolution of $\grm(M^{(3)})$.  From the resulting twists, we can see that this is a linear resolution.

\[
\begin{tikzcd}
  & & 0 \arrow[d] & 0 \arrow[d] & \\
   & & R(-2) \arrow[d] & R(-2) \arrow[d] & \\
  & \arrow[d] 0 & R(-1)^3 \arrow[d] & R(-1)^3 \arrow[d] & \\
  & R \arrow[d] & R \oplus R^2 \arrow[d] & R^2 \arrow[d] & \\
  0 \arrow[r] & R \arrow[r] \arrow[d] & \grm(M^{(3)}) \arrow[r, ] \arrow[d] & \mathfrak \grm(A_{\geq 1}\cdot S/\langle y \rangle) \ar[r] \arrow[d] & 0\\
  & 0 & 0 & 0 
\end{tikzcd}
\]

The resolution of $\grm(M^{(3)})$ above would then be plugged in to the next short exact sequence.  Applying the Horseshoe Lemma to that sequence would yield a linear resolution of $\grm(M^{(2)})$.  Continuing in this way, we will eventually obtain a linear resolution of $\grm(M^{(0)}) = \grm(M) \cong \grm(S_{\geq 7}(7))$, as claimed.
\end{example}

Although Theorem \ref{theorem:Strunction} addresses only the case of truncations of $S$, the same statement can be generalized as follows to any free $S$-module.

\begin{theorem}\label{thm:FreeModules}
    If $M$ is a free $S$-module, then $M_{\geq e}$ is nonstandard Koszul for any $e$.
\end{theorem}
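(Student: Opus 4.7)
The plan is to reduce immediately to the cyclic case and then invoke Theorem \ref{theorem:Strunction} summand by summand. Write $M = \bigoplus_{i \in I} S(-a_i)$. The truncation operation respects direct sums, so
\[
M_{\geq e} = \bigoplus_{i \in I} S(-a_i)_{\geq e}.
\]
For each $i$, one checks directly from the definition that $S(-a_i)_{\geq e}$ is isomorphic to a twist of $S_{\geq e-a_i}$: the degree $d$ piece of $S(-a_i)$ is $S_{d-a_i}$, so $S(-a_i)_{\geq e} \cong (S_{\geq e-a_i})(-a_i)$, with the convention that $S_{\geq c} = S$ whenever $c \leq 0$.

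Next I would pass to the associated graded. The construction $\gr_\mathfrak{m}(-)$ commutes with direct sums (since $\mathfrak m^i(N_1 \oplus N_2) = \mathfrak m^i N_1 \oplus \mathfrak m^i N_2$), and by the remark preceding Theorem \ref{theorem:Strunction}, $\gr_\mathfrak{m}$ does not see twists. Combining these,
\[
\gr_\mathfrak{m}(M_{\geq e}) \;\cong\; \bigoplus_{i \in I} \gr_\mathfrak{m}(S_{\geq e-a_i}).
\]
For each $i$, Theorem \ref{theorem:Strunction} (together with the trivial case $e - a_i \leq 0$, for which $S_{\geq e-a_i} = S$ and $\gr_\mathfrak{m}(S) = R$ has itself as a linear resolution) furnishes a linear resolution $\mathcal F^{(i)}_\bullet$ of $\gr_\mathfrak{m}(S_{\geq e-a_i})$ over $R$, with each summand generated in $\mathfrak m$-adic degree $0$.

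Finally, one takes the direct sum of these resolutions. Since each $\mathcal F^{(i)}_\bullet$ has $F^{(i)}_j$ concentrated in degree $j$ (in the standard grading on $R$), the same is true of $\bigoplus_i F^{(i)}_j$, so the direct sum is a linear resolution of $\gr_\mathfrak{m}(M_{\geq e})$ over $R$. Hence $M_{\geq e}$ is nonstandard Koszul. There is no real obstacle here beyond bookkeeping; the only mildly delicate point is confirming that the direct sum of linear resolutions remains linear, which is immediate once one verifies that every summand $\gr_\mathfrak{m}(S_{\geq e-a_i})$ is generated in $\mathfrak m$-adic degree $0$ (a fact implicit in Theorem \ref{theorem:Strunction} and visible in Example \ref{ex:IntroExample}).
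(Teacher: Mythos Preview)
Your proposal is correct and follows essentially the same route as the paper's own proof: decompose $M$ into twisted copies of $S$, use that truncation and $\grm(-)$ both commute with direct sums (and that $\grm$ ignores twists), apply Theorem~\ref{theorem:Strunction} to each summand, and then take the direct sum of the resulting linear resolutions. The only cosmetic difference is your sign convention on the twists and your slightly more explicit remark about why the direct sum of linear resolutions remains linear.
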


\begin{proof}
    Let $M = S(a_1) \oplus \cdots \oplus S(a_j)$.  Note that we have
    \begin{align*}
    M_{\geq e} &= S(a_1)_{\geq e} \oplus \cdots \oplus S(a_j)_{\geq e}\\
    &= S_{\geq a_1 + e} \oplus \cdots \oplus S_{\geq a_j +e}.
    \end{align*}

    In order to show $M_{\geq e}$ is nonstandard Koszul, we must show $\grm(M_{\geq e})$ has a linear resolution over $\grm(S)$. Because applying $\grm(-)$ respects direct sums, we have, 
    \begin{align*}
    \grm\left(M_{\geq e}\right) &= \grm\left(S_{\geq a_1 + e} \oplus \cdots \oplus S_{\geq a_j +e}\right)  \\
    &= \grm(S_{\geq a_1 + e}) \oplus \cdots \oplus \grm(S_{\geq a_j +e}).
    \end{align*}

    Each $\grm(S_{\geq a_i +e})$ has a linear resolution over $\grm(S)$ by Theorem \ref{theorem:Strunction}. If we take the direct sum of these linear resolutions, we get a linear resolution that resolves $\grm(M_{\geq e})$, as desired.
\end{proof}

A natural next question is whether a similar statement holds for any $S$-module, $M$. The analogous statement in the standard graded setting, due to Eisenbud and Goto \cite{EisenbudGoto1984}, holds only for sufficiently high truncations.  Therefore, we can expect at most for $M_{\geq e}$ to be nonstandard Koszul for $e$ sufficiently large.  Even for large $e$, the challenge lies in understanding the structure of $\grm(M_{\geq e})$. In general, it is difficult to find a closed form for $\grm(M_{\geq e})$, so our best hope of constructing a resolution is to make alterations to a resolution of $M$. To make this concrete, if $\mathcal{F}_\bullet$ is a free resolution of $M$, truncating each step gives an acyclic complex. Theorem \ref{thm:FreeModules} gives a linear resolution for $\grm((F_i)_{\geq e})$, so we could consider the totalization of the resulting double complex. However, since $\grm(-)$ is not even a functor, applying the associated graded construction to each $(F_i)_{\geq e}$ may not yield an acyclic complex, meaning that the totalization may not be acyclic either. In short, to understand the Koszulness of truncations of arbitrary $S$-modules, new results for applying $\grm(-)$ to arbitrary free resolutions may be necessary. Alternatively, it may be possible to develop different methods for obtaining nonstandard Koszulness.

\bibliographystyle{alpha}
\bibliography{bibliography}

\end{document}